\newcommand{\R}{\mathbb R}
\newcommand{\C}{\mathbb C}
\renewcommand{\Re}{\mathop {\rm Re}\nolimits}
  \newtheorem{theorem}{Theorem}[section]
  \newtheorem{corollary}[theorem]{Corollary}
  \newtheorem{lemma}[theorem]{Lemma}
\begin{document}

\title{Numerical radius and distance from unitary operators}

\author{Catalin {\sc Badea} and Michel {\sc Crouzeix} }

\date{}

\maketitle
\begin{abstract}
Denote by $w(A)$ the numerical radius of a bounded linear operator $A$ acting on Hilbert space. 
Suppose that $A$ is invertible and that
$w(A)\leq 1{+}\varepsilon$ and  $w(A^{-1})\leq 1{+}\varepsilon$ for some $\varepsilon\geq0$. 
It is shown that
$\inf\{\|A{-}U\|\,: U$ unitary$\}\leq c\varepsilon^{1/4}$ for some constant $c>0$. This generalizes a 
result due to J.G.~Stampfli, which is obtained for $\varepsilon = 0$. 
An example is given showing that the exponent $1/4$ is optimal. The more general case of the operator $\rho$-radius 
$w_{\rho}(\cdot)$ is discussed for $1\le \rho \le 2$.
\end{abstract}

\section{Introduction and statement of the results}
Let $H$ be a complex Hilbert space endowed with the inner product $\langle\cdot,\cdot\rangle$
and the associated norm $\|\cdot\|$. We denote by ${\cal B}(H)$ the C$^\ast$-algebra of all
bounded linear operators on $H$ equipped with the operator norm
\begin{equation*}
\|A\|=\sup\{\|Ah\|\,: h\in H, \|h\|=1\}.
\end{equation*}
It is easy to see that unitary operators can be characterized as 
invertible contractions with contractive inverses, i.e. as operators $A$ with $\|A\| \le 1$ and  
$\|A^{-1}\| \le 1$. More generally, if $A\in{\cal B}(H)$ is invertible then 
\begin{equation*} \inf\left\{\|A{-}U\|\,: U \textrm{ unitary }\right\} 
= \max\left(\|A\| - 1, 1 - \frac{1}{\|A^{-1}\|}\right) .
\end{equation*}
We refer to \cite[Theorem 1.3]{rogers} and \cite[Theorem 1]{terElst} for a proof of this equality 
using the polar decomposition of 
bounded operators. It also follows from this proof that if $A\in{\cal B}(H)$ is an invertible operator 
satisfying $\|A\|\leq r$ and $\|A^{-1}\|\leq r$ for some $r \ge 1$, 
then there exists a unitary operator $U\in{\cal B}(H)$ such that $\|A{-}U\|\leq r{-}1$.\medskip

The numerical radius of the operator $A$ is defined by
\begin{equation*}
w(A)=\sup\{|\langle Ah,h\rangle|\,: h\in H, \|h\|=1\}.
\end{equation*}
Stampfli has proved in \cite{stampfli} that numerical radius contractivity of $A$ 
and of its inverse $A^{-1}$, that is $w(A)\leq 1$ and $w(A^{-1}) \leq 1$, imply that
$A$ is unitary. 
We define a function $\psi(r)$ for $r\geq1$ by
\begin{equation*}
\psi(r)=\sup\{\|A\|\,: A\in{\cal B}(H), w(A)\leq r, \ w(A^{-1})\leq r\},
\end{equation*}
the supremum being also considered over all Hilbert spaces $H$.
Then the conditions $w(A)\leq r$ and  $w(A^{-1})\leq r$ imply 
$\max(\|A\|{-}1, 1{-}\|A^{-1}\|^{-1})\leq\max(\|A\|{-}1,\|A^{-1}\|{-}1)\leq\psi(r){-}1$, hence the existence of 
a unitary operator $U$ such that $\|A{-}U\|\leq \psi(r){-}1$. 
We have the two-sided estimate 
\begin{equation*}
r{+}\sqrt{r^2-1}\leq\psi(r)\leq2r.
\end{equation*}
The upper bound follows from 
the well-known inequalities $w(A)\leq\|A\|\leq 2w(A)$, while the lower bound is obtained by choosing $H=\C^2$ and 
\begin{equation*}
A= \begin{pmatrix}1&2y\\0&-1\end{pmatrix}\quad\text{with }\ y=\sqrt{r^2\!-\!1}, 
\end{equation*}
in the definition of $\psi$. Indeed, 
we have $A=A^{-1}$, $w(A)=\sqrt{1+y^2}=r$, and $\|A\|=y+\sqrt{1\!+\!y^2}=r+\sqrt{r^2\!-\!1}$.\medskip

Our first aim is to improve the upper estimate.
\begin{theorem}\label{thm:1}
Let $r \ge 1$. Then
\begin{equation}\label{psi}
\psi(r)\leq X(r)+\sqrt{X(r)^2{-}1},\quad\hbox{with}\quad
X(r)=r+\sqrt{r^2-1}.
\end{equation}
\end{theorem}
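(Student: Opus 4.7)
The plan is to reduce the problem to an essentially two-dimensional situation and then exploit the two hypotheses $w(A)\le r$ and $w(A^{-1})\le r$ through a key identity relating the action of $A^{-1}$ on the image $Ah$ to the numerical radius of $A$.

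Given $\varepsilon>0$, I would pick a unit vector $h$ with $\|Ah\|\ge\|A\|-\varepsilon$ and write $Ah = \alpha h + \beta g$, where $g$ is a unit vector orthogonal to $h$, $\beta\ge 0$, and $\alpha = \langle Ah,h\rangle$; thus $\|Ah\|^2 = |\alpha|^2 + \beta^2$. Set $V = \mathrm{span}\{h,g\}$, so $Ah\in V$ and the compression $M := PAP|_V$ is a $2\times 2$ matrix with $w(M)\le w(A)\le r$. From $w(A)\le r$ applied to $h$ one has $|\alpha|\le r$; and from $w(A^{-1})\le r$ applied to the unit vector $k := Ah/\|Ah\|\in V$, together with $A^{-1}k = h/\|Ah\|$ and hence $\langle A^{-1}k,k\rangle = \overline{\alpha}/\|Ah\|^2$, one gets the complementary estimate $|\alpha|\le r\|Ah\|^2$.

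To reach the full bound $\|A\|\le X(X(r))$ I would extend this to an entire one-parameter family of unit vectors $u\in V$. For every such $u$ one has $|\langle Au,u\rangle|\le r$, and, via the identity $\langle A^{-1}(Au),Au\rangle = \overline{\langle Au,u\rangle}$, also $|\langle Au,u\rangle|\le r\|Au\|^2$. These two quadratic families of inequalities, combined with $w(M)\le r$ (whose numerical range is an elliptical disk in the plane), should force the compression $M$ to satisfy $\|Mh\|\le X(X(r))$. The double iteration of $X$ arises because the inequalities coming from $w(A)\le r$ alone bound some intermediate quantity by $X(r)$, and a further application of the inequalities coming from $w(A^{-1})\le r$ applies the Joukowski-type map $X$ once more.

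The main obstacle is precisely that the compression of $A^{-1}$ to $V$ is \emph{not} the inverse $M^{-1}$, since $V$ is generally neither $A$- nor $A^{-1}$-invariant. Accordingly, the hypothesis $w(A^{-1})\le r$ can only be used via the specific vectors $Au/\|Au\|$, on which $A^{-1}$ is known to act back into $V$. Organising these asymmetric constraints to reproduce exactly the bound $X(X(r))$ announced in \eqref{psi} is the technical heart of the argument; I expect the final extremal computation to reduce to an algebraic identity involving $X(r)=r+\sqrt{r^2-1}$ and its square under composition.
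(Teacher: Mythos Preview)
Your reduction to a two-dimensional compression loses too much of the hypothesis $w(A^{-1})\le r$ to reach the bound. The inequalities you extract for unit $u\in V$ are $|\langle Au,u\rangle|\le r$ and $|\langle Au,u\rangle|\le r\,\|Au\|^2$; the second is a \emph{lower} bound on $\|Au\|$, not an upper one, and for $u$ not proportional to $h$ the quantity $\|Au\|$ is not controlled by the compression (one only has $\|Au\|\ge\|Mu\|$, the unhelpful direction). So effectively you are left with $w(M)\le r$ together with the single scalar constraint $|\alpha|\le r\|Ah\|^2$ at the distinguished vector $h$. These are compatible with $\|Mh\|=2r$: take $M=\bigl(\begin{smallmatrix}0&0\\2r&0\end{smallmatrix}\bigr)$ in the basis $(h,g)$, so that $w(M)=r$, $\alpha=0$, and the constraint $0\le r\cdot 4r^2$ is vacuous. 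No extremal computation inside $V$ can therefore improve on the trivial bound $2r$; the ``asymmetric constraints'' you hope to organise simply do not see enough of $A^{-1}$.

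The paper avoids compression entirely and instead passes to the auxiliary operator $N=\tfrac12\bigl(A+(A^*)^{-1}\bigr)$. Two facts are immediate: $w(N)\le r$ by subadditivity of the numerical radius (using $w((A^*)^{-1})=w(A^{-1})\le r$), and $N^*N-1=\tfrac14\bigl(A^*A+(A^*A)^{-1}-2\bigr)\ge 0$, so $\|N^{-1}\|\le 1$. An elementary functional-calculus identity in $A^*A$ then gives $\|A\|\le \|N\|+\sqrt{\|N\|^2-1}$, reducing the problem to showing $\|N\|\le X(r)$ under the hypotheses $w(N)\le r$ and $\|N^{-1}\|\le 1$. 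This last step uses Ando's structure theorem for numerical-radius contractions, $N=2r\,(I{-}C)^{1/2}UC^{1/2}$ with $U$ unitary and $0<C<1$; the constraint $\|N^{-1}\|\le 1$ translates into spectral bounds on $C$ and yields $\|N\|\le r+\sqrt{r^2-1}=X(r)$. The double appearance of the Joukowski map thus has two distinct sources---the passage $A\mapsto N$ and the lemma on $N$---rather than a single iterated extremal computation as you anticipate.
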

The estimate given in Theorem \ref{thm:1} is more accurate than $\psi(r)\leq2r$ for 
$r$ close to $1$, more precisely for 
$1\leq r\leq 1.0290855 \dots$. It also gives $\psi(1) = 1$ (leading to Stampfli's result)
and the following asymptotic estimate.

\begin{corollary}\label{cor:1}
 We have 
$$\psi(1{+}\varepsilon)\leq 1+\sqrt[4]{8\varepsilon}
+O(\varepsilon^{1/2}), \quad \varepsilon \to 0 .$$
\end{corollary}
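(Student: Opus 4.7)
The plan is to derive Corollary \ref{cor:1} directly from the bound in Theorem \ref{thm:1} by substituting $r = 1+\varepsilon$ and expanding both $X(r)$ and $\sqrt{X(r)^2-1}$ in powers of $\sqrt{\varepsilon}$ as $\varepsilon \to 0^+$. The arithmetic is elementary, so the task is really one of careful bookkeeping to keep track of which terms survive at the leading order $\varepsilon^{1/4}$ and which are absorbed in the $O(\varepsilon^{1/2})$ remainder.

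First I would write $X(1+\varepsilon) = 1 + \varepsilon + \sqrt{2\varepsilon + \varepsilon^2}$ and factor out $\sqrt{2\varepsilon}$ from the radical, using $\sqrt{1+\varepsilon/2} = 1 + O(\varepsilon)$, to obtain the clean expansion $X(1+\varepsilon) = 1 + \sqrt{2\varepsilon} + O(\varepsilon)$. Next, rather than square this directly, I would exploit the factorization $X^2-1 = (X-1)(X+1)$. The factor $X-1 = \sqrt{2\varepsilon}+O(\varepsilon)$ and $X+1 = 2 + O(\sqrt\varepsilon)$ combine to give $X(r)^2 - 1 = 2\sqrt{2\varepsilon}\,\bigl(1+O(\sqrt\varepsilon)\bigr)$, whence $\sqrt{X(r)^2-1} = \sqrt{2}\,(2\varepsilon)^{1/4}\bigl(1+O(\sqrt\varepsilon)\bigr)$. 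The key numerical identity $\sqrt{2}\,(2\varepsilon)^{1/4} = 2^{3/4}\varepsilon^{1/4} = (8\varepsilon)^{1/4}$ yields the $\sqrt[4]{8\varepsilon}$ term, with an error of order $\varepsilon^{1/4}\cdot\sqrt{\varepsilon} = \varepsilon^{3/4}$.

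Adding the two asymptotic expansions gives
$\psi(1+\varepsilon) \le X(r) + \sqrt{X(r)^2-1} = 1 + \sqrt[4]{8\varepsilon} + \sqrt{2\varepsilon} + O(\varepsilon^{3/4})$. Since $\sqrt{2\varepsilon}$ dominates the $O(\varepsilon^{3/4})$ remainder as $\varepsilon\to 0^+$, everything beyond the $\sqrt[4]{8\varepsilon}$ term can be absorbed into a single $O(\varepsilon^{1/2})$, giving the claimed bound. I expect no real obstacle: the only thing to watch is that the $\sqrt{\varepsilon}$ relative error in expanding the outer square root does not pollute the leading $\sqrt[4]{8\varepsilon}$ coefficient, which is ensured by the factorization argument above.
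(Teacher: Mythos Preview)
Your proposal is correct and is exactly the routine Taylor expansion of the bound $X(r)+\sqrt{X(r)^2-1}$ from Theorem~\ref{thm:1} that the paper has in mind; the paper states the corollary as an immediate consequence without spelling out the computation. Your bookkeeping of the orders (in particular obtaining $(8\varepsilon)^{1/4}$ via $\sqrt{2}\,(2\varepsilon)^{1/4}=2^{3/4}\varepsilon^{1/4}$ and absorbing the $\sqrt{2\varepsilon}$ and $O(\varepsilon^{3/4})$ terms into $O(\varepsilon^{1/2})$) is accurate.
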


Our second aim is to prove that the exponent $1/4$ in Corollary \ref{cor:1} is optimal. 
This is a consequence of the following result.

\begin{theorem}\label{thm:2}
 Let $n$ be a positive integer of the form $n = 8k+4$. There exists a $n\times n$ 
invertible matrix $A_n$ with complex 
entries such that 
$$ w(A_n) \le \frac{1}{\cos\frac{\pi}{n}}, \quad w(A_n^{-1}) \le\frac{1}{\cos\frac{\pi}{n}}, \quad \|A_n\| = 1+\frac{1}{8\sqrt{n}} .$$
\end{theorem}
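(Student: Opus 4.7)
\medskip\noindent\emph{Proof plan.}
Since $1/\cos(\pi/n)-1 = \pi^2/(2n^2)+O(n^{-4})$ while $1/(8\sqrt n)=\Theta(n^{-1/2})$, the matrix $A_n$ must be a perturbation of size $\Theta(n^{-1/2})$ of a unitary matrix $U_n$ for which both numerical radii grow only by a second-order amount $\Theta(n^{-2})$; in particular a naive rank-one perturbation is ruled out, since it would generically give first-order growth of $w$. My plan is to look for $A_n$ of the form
$$A_n = U_n + \alpha_n B_n,$$
where $U_n=\mathrm{diag}(\zeta_1,\dots,\zeta_n)$ is the diagonal unitary with entries the $n$-th roots of $-1$, $\zeta_j=e^{i(2j-1)\pi/n}$, the scalar $\alpha_n$ is of order $n^{-1/2}$, and $B_n$ is a structured matrix fixed by several competing constraints. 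The arithmetic condition $n=8k+4$ should enter so that $B_n$ admits a self-duality $U_n^{-1}B_nU_n^{-1}\propto B_n^*$; this makes $A_n^{-1}$ unitarily equivalent to $\pm A_n^*$ and hence reduces the bound $w(A_n^{-1})\le 1/\cos(\pi/n)$ to the bound $w(A_n)\le 1/\cos(\pi/n)$.

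The norm of $A_n$ then follows from a direct computation of $A_nA_n^*-I$ (a low-rank perturbation of $0$), and $\alpha_n$ is tuned so that $\|A_n\|=1+1/(8\sqrt n)$ exactly. The main task is the numerical radius estimate, which via
$$w(A_n)=\sup_{\theta\in\R}\lambda_{\max}\bigl(\Re(e^{-i\theta}A_n)\bigr)$$
amounts to bounding the largest eigenvalue of $M_\theta:=\Re(e^{-i\theta}U_n)+\alpha_n\Re(e^{-i\theta}B_n)$ by $1/\cos(\pi/n)$ for all $\theta$. The eigenvalues of the diagonal part $\Re(e^{-i\theta}U_n)$ are the $\cos((2j-1)\pi/n-\theta)$, which attain the value $1$ only at the resonant angles $\theta_j=(2j-1)\pi/n$ and have $e_j$ as the corresponding top eigenvector. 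Away from these angles the spectral gap is of the same order or larger than the available slack $1/\cos(\pi/n)-\max_j\cos((2j-1)\pi/n-\theta)$, and a direct Rayleigh-quotient estimate absorbs the rank-two perturbation of size $O(\alpha_n)$.

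The delicate case is at the resonant angles. The first-order shift $\alpha_n\Re(e^{-i\theta_j}(B_n)_{jj})$ must vanish for every $j$ (otherwise it would be $\Theta(n^{-1/2})$, overflowing the slack $\Theta(n^{-2})$), and the resulting second-order Rayleigh--Schr\"odinger shift
$$\alpha_n^2\sum_{k\neq j}\frac{|\langle\Re(e^{-i\theta_j}B_n)e_j,e_k\rangle|^2}{1-\cos((2k-1)\pi/n-\theta_j)}$$
must be bounded by $\pi^2/(2n^2)+O(n^{-4})$ uniformly in $j$. The main obstacle is to exhibit a single $B_n$ meeting these three simultaneous requirements (vanishing first-order correction, controlled second-order correction, and prescribed singular-value contribution to $\|A_n\|$), and to verify the last inequality explicitly. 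This is where the congruence $n\equiv 4\pmod 8$ is essential: it is the arithmetic hypothesis that allows the relevant sums of $n$-th roots of unity to cancel or simplify, making the three constraints compatible.
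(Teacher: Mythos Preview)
Your ansatz $A_n=U_n+\alpha_nB_n$ with $U_n=\mathrm{diag}(e^{i(2j-1)\pi/n})$ matches the paper's construction, but the perturbative route to $w(A_n)\le 1/\cos(\pi/n)$ cannot close as written. The perturbation must have operator norm $\|\alpha_nB_n\|=1/(8\sqrt n)$, while for \emph{every} $\theta$ the slack $1/\cos(\pi/n)-\max_j\cos\bigl((2j-1)\pi/n-\theta\bigr)$ is only $\Theta(n^{-2})$: the resonant angles are $2\pi/n$-dense, so there is no ``off-resonant'' regime in which a Weyl/Rayleigh bound of size $n^{-1/2}$ could be absorbed. At a resonant $\theta_j$ the top eigenvalue of $\Re(e^{-i\theta_j}U_n)$ equals $1$ with gap $1-\cos(2\pi/n)\sim 2\pi^2/n^2$ to the next one; second-order Rayleigh--Schr\"odinger is only justified when the perturbation is small relative to this gap, and here it is larger by a factor $n^{3/2}$. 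Even if $B_n$ is supported far from the diagonal so that $e_j$ couples directly only to distant $e_k$ (denominators $\Theta(1)$), fourth-order paths $j\to k\to\ell\to m\to j$ can visit an intermediate $\ell$ with $|\ell-j|=1$, reinstating the $n^{-2}$ denominators. A truncated expansion in $\alpha_n$ therefore does not certify an $O(n^{-2})$ bound against an $O(n^{-1/2})$ perturbation. The self-duality $U_n^{-1}B_nU_n^{-1}=B_n^*$ is indeed satisfiable (and holds in the paper's example), but it yields only $A_n^*=U_n^{-1}A_nU_n^{-1}$, which is not a unitary similarity; one does not get $A_n^{-1}\sim\pm A_n^*$, and in the paper's example $A_n$ and $A_n^{-1}$ have different singular values, so $w(A_n^{-1})$ genuinely requires a separate argument.

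The paper bypasses the perturbation problem by building in an exact discrete symmetry: there is a unitary $V$ (a signed cyclic shift $P\Delta$) with $V^{-1}A_nV=e^{2\pi i/n}A_n$, hence $W(A_n)=e^{2\pi i/n}W(A_n)$, and the numerical-radius bound reduces to the \emph{single} non-perturbative inequality $\bigl\|\tfrac12(A_n+A_n^*)\bigr\|\le 1$. Concretely $A_n=DBD$ with $D^2=U_n$, $B=I+\tfrac{1}{2n^{3/2}}E$, and $E$ the symmetric $0$--$1$ matrix with $e_{ij}=1$ iff $3k+2\le|i-j|\le5k+2$; the congruence $n=8k+4$ is what makes $E$ invariant under the cyclic shift $P$. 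This $E$ has $\|E\|=n/4$ (whence $\|A_n\|=1+1/(8\sqrt n)$ by Perron--Frobenius) and rank $\Theta(n)$, so neither $A_nA_n^*-I$ nor $\Re(e^{-i\theta}B_n)$ is low-rank. The real-part bound is then proved by the substitution $v_j=u_j\sin\bigl((j-\tfrac12)\pi/n\bigr)$ and a Frobenius-norm estimate on the resulting coefficient matrix; the companion bound $\bigl\|\tfrac12(A_n^{-1}+(A_n^{-1})^*)\bigr\|\le 1$ is established by a parallel computation using the Neumann series for $B^{-1}$.
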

Indeed, Theorem \ref{thm:2} implies that 
$$\psi\big(\frac{1}{\cos\frac{\pi}{n}}\big)\geq \|A_n\|=1\!+\!\frac{1}{8\sqrt{n}} .$$
Taking $1\!+\!\varepsilon=1/\cos\frac\pi n=1+ \frac{\pi^2}{2n^2}+O(\frac{1}{n^4})$, we 
see that the exponent $\frac14$ cannot be improved.

\medskip

More generally, we can consider for $\rho\geq1$ the $\rho$-radius $w_\rho(A)$ introduced by
Sz.-Nagy  and Foia\c{s} (see \cite[Chapter 1]{nafo} and the references therein). 
Consider the class ${\mathcal C}_\rho$ of operators 
$T\in {\mathcal B}(H)$
which admit unitary $\rho$-dilations, i.e. there exist a super-space ${\mathcal H}\supset H$ and 
a unitary operator $U\in  {\mathcal B}({\mathcal H})$ such that 
\begin{equation*}
T^n=\rho P U^nP^*,\qquad \hbox{for } n=1,2,\dots .
\end{equation*}
Here $P$ denotes the orthogonal projection from ${\mathcal H}$ onto $H$. 
Then the operator $\rho$-radius is defined by
\begin{equation*}
w_\rho(A)=\inf\{\lambda>0\,; \lambda^{-1}A\in {\mathcal C}_\rho\}. 
\end{equation*}
From this definition it is easily seen that $r(A)\leq w_\rho(A)\leq \rho \|A\|$, 
where $r(A)$ denotes the spectral radius of $A$. Also, $w_\rho(A)$ 
is a non-increasing function of $\rho$. 
Another equivalent definition follows from \cite[Theorem 11.1]{nafo}:
\begin{align*}
w_\rho(A)&=\sup_{h\in \mathcal E_\rho}\Big\{(1\!-\!\tfrac1\rho)\,|\langle Ah,h\rangle|+\sqrt
{(1\!-\!\tfrac1\rho)^2|\langle Ah,h\rangle|^2+(\tfrac2\rho{-}1)\,\| Ah\|^2}\Big\},\quad\hbox{with}\\
 \mathcal E_\rho&=\{h\in H\,;\|h\|=1\ {\rm and} 
 (1\!-\!\tfrac1\rho)^2|\langle Ah,h\rangle|^2
 -(1\!-\!\tfrac2\rho)\,\| Ah\|^2\geq0\}.
\end{align*}
Notice that $ \mathcal E_\rho=\{h\in H\,; \|h\|=1\}$ whenever $1\leq \rho\leq 2$. 
This shows that $w_1(A)=\|A\|$,
$w_2(A)=w(A)$ and $w_\rho(A)$ is a convex function of $A$ if $1\leq \rho\leq 2$. 

We now define a function $\psi_\rho(r)$ for $r\geq1$ by
\begin{equation*}
\psi_\rho(r)=\sup\{\|A\|\,; A\in{\cal B}(H), w_\rho(A)\leq r, \ w_\rho(A^{-1})\leq r\}.
\end{equation*}
As before, the conditions $w_\rho(A)\leq r$ and  $w_\rho(A^{-1})\leq r$ imply the existence 
of a unitary operator $U$ such that $\|A{-}U\|\leq \psi_\rho(r){-}1$, 
and we have $\psi_\rho(r)\leq \rho r$. 
We  will generalize the estimate \eqref{psi} from Theorem \ref{thm:1} 
by proving, for $1\leq\rho\leq2$, the following result.
\begin{theorem}\label{thm:3}
For $1\leq\rho\leq2$ we have
\begin{align}\label{psirho}
\psi_\rho(r)&\leq  X_\rho(r)+\sqrt{X_\rho(r)^2-1},\\
\hbox{with}\quad X_\rho(r)&=\frac{2+\rho r^2-\rho+
\sqrt{(2+\rho r^2-\rho)^2-4r^2}}{2r}.\nonumber
\end{align}
\end{theorem}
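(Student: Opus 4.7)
The plan is to parallel the proof of Theorem~\ref{thm:1}, transposed to the $\rho$-radius setting available for $1\le\rho\le 2$. The key tool is the equivalent scalar characterization of $w_\rho(\cdot)\le r$: since for $1\le\rho\le 2$ the set $\mathcal E_\rho$ recalled in the introduction is the whole unit sphere, squaring the bound in the definition of $w_\rho$ shows that
\begin{equation*}
w_\rho(T)\le r\ \Longleftrightarrow\ (2-\rho)\,\|Th\|^{2}+2r(\rho-1)\,|\langle Th,h\rangle|\le\rho r^{2}\quad(\|h\|=1).
\end{equation*}
This interpolates between $\|Th\|\le r$ at $\rho=1$ and $|\langle Th,h\rangle|\le r$ at $\rho=2$; I apply it to both $T=A$ and $T=A^{-1}$.

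Next, fix a unit vector $h$ with $b=\|Ah\|>0$ and set $a=|\langle Ah,h\rangle|$, $y=Ah/b$. From $A^{-1}y=h/b$ one gets $\|A^{-1}y\|=1/b$ and $|\langle A^{-1}y,y\rangle|=a/b^{2}$, and the scalar form applied to $h$ (for $A$) and to $y$ (for $A^{-1}$) gives the coupled pair
\begin{align*}
(2-\rho)\,b^{2}+2r(\rho-1)\,a&\le\rho r^{2},\\
(2-\rho)+2r(\rho-1)\,a&\le\rho r^{2}\,b^{2}.
\end{align*}
These two alone control only $a$, so I would enrich them, as in Theorem~\ref{thm:1}, by testing $w_\rho$ on auxiliary unit vectors in the two-dimensional subspace $\operatorname{span}(h,y)$---naturally the pencil $(h+\lambda y)/\|h+\lambda y\|$, $\lambda\in\C$, and its counterpart $(y+\mu h)/\|y+\mu h\|$ used for the $w_\rho(A^{-1})$ inequality. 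Expanding in the orthonormal basis built from $(h,y)$ introduces unknown scalars such as $\langle Ay,y\rangle$ and $\langle A^{-1}h,h\rangle$, but optimizing over $\lambda,\mu$ and exploiting the $A\leftrightarrow A^{-1}$ duality should let me eliminate them together with $a$, collapsing the system to
\begin{equation*}
b+\tfrac{1}{b}\le 2\,X_\rho(r),
\end{equation*}
which for $b\ge 1$ is exactly the bound $b\le X_\rho(r)+\sqrt{X_\rho(r)^{2}-1}$. Passing to the supremum over $h$, and then over admissible $A$, proves Theorem~\ref{thm:3}.

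The crux of the argument, and the step I expect to be the main obstacle, is the last: carrying out the $\rho$-dependent elimination so that the algebra delivers precisely the constant $X_\rho(r)$ (which by construction is the root $\ge 1$ of $rx^{2}-(2+\rho r^{2}-\rho)x+r=0$) rather than something weaker. Two sanity checks guide the computation. At $\rho=2$ the scheme must reproduce Theorem~\ref{thm:1}, and at $\rho=1$ the scalar form of the first step already implies the stronger (hence compatible) bound $\|A\|\le r$; continuity of $X_\rho(r)$ in $\rho\in[1,2]$ then provides an interpolation check for the intermediate values.
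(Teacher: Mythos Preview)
Your proposal has a genuine gap: the heart of the argument---the ``$\rho$-dependent elimination'' that is supposed to produce $b+1/b\le 2X_\rho(r)$---is never carried out, and you yourself flag it as the main obstacle. As written this is a plan, not a proof. (There is also no separate proof of Theorem~\ref{thm:1} in the paper to parallel: Theorem~\ref{thm:1} is obtained as the special case $\rho=2$ of Theorem~\ref{thm:3}.)

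The difficulty with your scheme is real. Testing $w_\rho(A)\le r$ on the pencil $(h+\lambda y)/\|h+\lambda y\|$ brings in $\langle Ay,h\rangle$, $\langle Ay,y\rangle$ and $\|Ay\|$, none of which are controlled by the original data $(a,b)$, and the $A\leftrightarrow A^{-1}$ duality you invoke only links $A$ at $h$ with $A^{-1}$ at $y$, not $A$ at $y$. There is no evident mechanism to close the resulting system of inequalities, and obtaining the \emph{sharp} constant $X_\rho(r)$ this way (rather than some weaker bound) looks optimistic.

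The paper sidesteps this entirely by introducing the auxiliary operator $M=\tfrac12\bigl(A+(A^{*})^{-1}\bigr)$. Two elementary identities give $M^{*}M\ge 1$ (hence $\|M^{-1}\|\le1$) and $\|A\|\le\|M\|+\sqrt{\|M\|^{2}-1}$. Since $w_\rho(\cdot)$ is a norm for $1\le\rho\le2$ and $w_\rho\bigl((A^{*})^{-1}\bigr)=w_\rho(A^{-1})\le r$, one also has $w_\rho(M)\le r$. The problem thus reduces to a lemma: $w_\rho(M)\le r$ and $\|M^{-1}\|\le1$ imply $\|M\|\le X_\rho(r)$. This is proved via the Durszt factorization $M=\rho r\,B^{1/2}UC^{1/2}$ with $U$ unitary, $0<C<1$ selfadjoint, and $B=f(C)$ for the explicit involution $f(x)=(1-x)/(1-\rho(2-\rho)x)$; the hypothesis $\|M^{-1}\|\le1$ converts into two-sided spectral bounds on $C$ that yield $\|M\|\le X_\rho(r)$ directly. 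The operator $M$ and the Durszt decomposition are the missing ideas in your approach.
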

\begin{corollary}\label{cor:2}
For $1\leq\rho\leq2$ we have 
$$\psi_\rho(1{+}\varepsilon)\leq 1+\sqrt[4]{8(\rho-1)\varepsilon}+O(\varepsilon^{1/2}), 
\quad \varepsilon \to 0.$$
\end{corollary}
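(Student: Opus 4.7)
The plan is to prove Corollary \ref{cor:2} by a direct asymptotic expansion of the bound provided by Theorem \ref{thm:3}, namely $\psi_\rho(r) \le X_\rho(r) + \sqrt{X_\rho(r)^2 - 1}$, as $r = 1+\varepsilon \to 1^+$. Since the expression for $X_\rho(r)$ involves a square root that vanishes at $r = 1$ and the outer expression $y \mapsto y + \sqrt{y^2-1}$ also has a square-root singularity there, I expect two nested expansions to stack into the claimed $\varepsilon^{1/4}$ behaviour. The first step will be to factor the discriminant in $X_\rho$ by observing that it vanishes at $r = 1$; concretely,
\begin{equation*}
(2+\rho r^2-\rho)^2 - 4r^2 = (r^2-1)\bigl(\rho(r+1)-2\bigr)\bigl(\rho(r-1)+2\bigr),
\end{equation*}
which for $r = 1+\varepsilon$ equals $(2\varepsilon+\varepsilon^2)(2(\rho{-}1)+\rho\varepsilon)(2+\rho\varepsilon) = 8(\rho-1)\varepsilon + O(\varepsilon^2)$. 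This factorization makes the $\sqrt{\varepsilon}$ structure transparent.

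Next, I will substitute $r = 1+\varepsilon$ into the formula for $X_\rho$ and expand. Using $2 + \rho r^2 - \rho = 2 + 2\rho\varepsilon + O(\varepsilon^2)$, the previous step gives
\begin{equation*}
X_\rho(1{+}\varepsilon) = \frac{2 + 2\rho\varepsilon + \sqrt{8(\rho{-}1)\varepsilon} + O(\varepsilon^{3/2})}{2(1+\varepsilon)} = 1 + \sqrt{2(\rho{-}1)\varepsilon} + (\rho-1)\varepsilon + O(\varepsilon^{3/2}).
\end{equation*}
Writing $X_\rho(r)^2 - 1 = 2(X_\rho(r)-1) + (X_\rho(r)-1)^2$, the previous expansion yields
\begin{equation*}
X_\rho(1{+}\varepsilon)^2 - 1 = \sqrt{8(\rho{-}1)\varepsilon} + 4(\rho-1)\varepsilon + O(\varepsilon^{3/2}),
\end{equation*}
so taking a square root (and pulling out the leading term) gives $\sqrt{X_\rho(r)^2-1} = \sqrt[4]{8(\rho-1)\varepsilon}\,(1 + O(\varepsilon^{1/2})) = \sqrt[4]{8(\rho-1)\varepsilon} + O(\varepsilon^{3/4})$. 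Adding this to the expansion of $X_\rho(r)$ and observing that the $\sqrt{2(\rho-1)\varepsilon}$, $(\rho-1)\varepsilon$, and $O(\varepsilon^{3/4})$ remainders are all $O(\varepsilon^{1/2})$, I arrive at
\begin{equation*}
\psi_\rho(1{+}\varepsilon) \le X_\rho(1{+}\varepsilon) + \sqrt{X_\rho(1{+}\varepsilon)^2-1} = 1 + \sqrt[4]{8(\rho{-}1)\varepsilon} + O(\varepsilon^{1/2}),
\end{equation*}
which is the desired estimate.

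This argument is mostly bookkeeping, so I do not expect any conceptual obstacle; the only subtlety is that the two $\sqrt{\,\cdot\,}$'s need to be expanded in the correct order, because the outer radical sees a $\sqrt{\varepsilon}$ leading term (hence $\varepsilon^{1/4}$) while the corrections inside are already of order $\varepsilon$, so one has to be careful to include enough terms to land cleanly at $O(\varepsilon^{1/2})$ rather than a weaker remainder. For the endpoint $\rho=1$, the discriminant reduces to $(r^2-1)^2$ and $X_1(r)=r$, so Theorem \ref{thm:3} gives $\psi_1(1{+}\varepsilon) \le 1+\varepsilon+\sqrt{2\varepsilon+\varepsilon^2} = 1 + O(\varepsilon^{1/2})$, consistent with the stated bound (in which $\sqrt[4]{8(\rho-1)\varepsilon}$ vanishes).
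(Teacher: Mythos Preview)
Your proposal is correct and is exactly the approach the paper intends: the corollary is stated without proof as an immediate consequence of Theorem~\ref{thm:3}, and the argument required is precisely the asymptotic expansion of $X_\rho(r)+\sqrt{X_\rho(r)^2-1}$ at $r=1$ that you have carried out. Your factorization of the discriminant and the nested square-root expansions are accurate, including the handling of the endpoint $\rho=1$.
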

We recover in this way for $1\leq\rho\leq2$ the recent result of Ando and
Li \cite[Theorem 2.3]{anli}, namely that $w_\rho(A)\leq1$ and $w_\rho(A^{-1}) \leq 1$ imply
$A$ is unitary. The range $1\leq\rho\leq2$ coincides with the range of 
those $\rho \ge 1$ for which $w_\rho(\cdot)$ is a norm. Contrarily to \cite{anli}, we have
not been able to treat the case $\rho>2$.

\medskip

The organization of the paper is as follows. In Section 2 we prove Theorem \ref{thm:3}, which reduces to
Theorem \ref{thm:1} in the case $\rho=2$. The proof of Theorem \ref{thm:2}
 which shows the optimality of the exponent $1/4$ in Corollary \ref{cor:1} is given in Section 3. 

As a concluding remark, we would like to mention that 
the present developments have been influenced by the 
recent work of Sano/Uchiyama \cite{saus} and Ando/Li \cite{anli}.
In \cite{crou}, 
inspired by the paper of Stampfli \cite{stampfli}, we have developed another 
(more complicated) approach in the case $\rho=2$.

%\section{Estimate of $\psi_\rho$} 
\section{Proof of Theorem \ref{thm:3} about $\psi_\rho$} 
Let us consider $M=\frac{1}{2}(A+(A^*)^{-1})$; then
\begin{equation*}
M^*M-1=\tfrac{1}{4}(A^*A+(A^*A)^{-1}-2)\geq 0 .
\end{equation*}
This implies $\|M^{-1}\|\leq 1$. In what follows $C^{1/2}$ will denote the positive 
square root of the self-adjoint positive operator $C$. 
The relation $(A^*A-2M^*M+1)^2=4M^*M(M^*M-1)$ yields 
\begin{align*}
A^*A-2M^*M+1&\leq2(M^*M)^{1/2}(M^*M-1)^{1/2},\\\hbox{whence}\quad
A^*A&\leq ((M^*M)^{1/2}+(M^*M-1)^{1/2})^2 .
\end{align*} 
Therefore $\|A\|\leq \|M\|+\sqrt{\|M\|^2-1}$.\medskip

We now assume $1\leq\rho\leq2$. Then $w_\rho(.)$ is a norm and the two 
conditions $w_\rho(A)\leq r $ and $w_\rho(A^{-1})\leq r$
imply $w_\rho(M)\leq r$. The desired estimate of $\psi_\rho(r)$ will follow from the following 
auxiliary result.

\begin{lemma}
Assume $\rho\geq1$. Then the assumptions $w_\rho(M)\leq r$ and $\|M^{-1}\|\leq 1$ imply
$\|M\|\leq X_\rho(r)$.
\end{lemma}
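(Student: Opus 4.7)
I would first translate both hypotheses into operator inequalities. Assuming $\rho\in[1,2]$ so that $\mathcal E_\rho$ is the entire unit sphere, squaring the defining inequality for $w_\rho(M)\le r$ yields, for every unit $h$,
$$(1-2c)\|Mh\|^2+2rc\,|\langle Mh,h\rangle|\le r^2,\qquad c:=1-1/\rho, \qquad(*)$$
equivalently the operator family $(1-2c)M^*M+rc(e^{i\theta}M+e^{-i\theta}M^*)\le r^2 I$ for all $\theta$. The condition $\|M^{-1}\|\le 1$ gives both $M^*M\ge I$ and, since $w_\rho\le\|\cdot\|$ for $\rho\ge 1$, also $w_\rho(M^{-1})\le 1$. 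Applying the translation to $M^{-1}$ with $r=1$ and then conjugating by $M$ (multiplying on the left by $M$, on the right by $M^*$, using the identity $M(M^{-1})^*M^{-1}M^*=M(MM^*)^{-1}M^*=I$), I obtain the dual family
$$MM^*\ge(1-2c)I+c(e^{i\theta}M^*+e^{-i\theta}M),\qquad\forall\theta. \qquad(**)$$

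A useful reformulation: since $|M|=(M^*M)^{1/2}\ge I$, the polar decomposition $M=U|M|$ has $U$ unitary, so $(M^{-1})^*=U|M|^{-1}$ and $M+(M^{-1})^*=U(|M|+|M|^{-1})$; hence
$$\bigl\|M+(M^{-1})^*\bigr\|=\bigl\||M|+|M|^{-1}\bigr\|=\|M\|+1/\|M\|,$$
the last step because $\sigma\mapsto\sigma+1/\sigma$ is increasing on $[1,\infty)$ and the spectrum of $|M|$ lies in $[1,\|M\|]$. The conclusion $\|M\|\le X_\rho(r)$ is therefore equivalent to
$$\bigl\|M+(M^{-1})^*\bigr\|\;\le\;X_\rho(r)+\tfrac{1}{X_\rho(r)}\;=\;\tfrac{1-2c+r^2}{r(1-c)},$$
the second equality being the defining quadratic $r(1-c)X^2-(1-2c+r^2)X+r(1-c)=0$ for
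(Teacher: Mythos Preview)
Your argument breaks off mid-sentence before the actual work begins, and what you have so far does not yet constitute a proof. The polar-decomposition identity
\[
\|M+(M^{-1})^*\|=\|M\|+\|M\|^{-1}
\]
is correct, but the reformulation it gives,
\(\|M\|+\|M\|^{-1}\le X_\rho(r)+X_\rho(r)^{-1}\),
is \emph{equivalent} to the original claim \(\|M\|\le X_\rho(r)\) (both quantities are \(\ge 1\) and \(t\mapsto t+t^{-1}\) is increasing on \([1,\infty)\)). So no progress has been made toward the bound; inequalities \((*)\) and \((**)\) are never brought to bear on \(\|M+(M^{-1})^*\|\), and it is not clear how you intend to do so.

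There is also a genuine error in deriving \((**)\): the identity \(M(M^{-1})^*M^{-1}M^*=M(MM^*)^{-1}M^*=I\) fails for non-normal \(M\) (test \(M=\bigl(\begin{smallmatrix}1&1\\0&1\end{smallmatrix}\bigr)\)). The correct conjugation is on the left by \(M^*\) and on the right by \(M\), which yields \(M^*M\) rather than \(MM^*\) on the left of \((**)\). Finally, you explicitly restrict to \(\rho\in[1,2]\) so that \(\mathcal E_\rho\) is the whole unit sphere, whereas the lemma is stated for all \(\rho\ge 1\).

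For comparison, the paper does not use the inequalities \((*)\),\((**)\) at all. It invokes the Durszt factorization \(M=\rho r\,B^{1/2}UC^{1/2}\) (with \(U\) unitary, \(0<C<1\), \(B=f(C)\) for the involution \(f(x)=(1-x)/(1-\rho(2-\rho)x)\)), and then uses \(\|M^{-1}\|\le1\) to pin down the spectra of \(B\) and \(C\), giving \(\|M\|\le\rho r\sqrt{f(\alpha)\beta}\le X_\rho(r)\) directly.
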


\begin{proof} The contractivity of $M^{-1}$ implies
\begin{equation}\label{a}
\|u\| \leq \|Mu\|, \quad ( \forall u\in H). 
\end{equation}
 As $w_\rho(M)\leq r$, it follows from a generalization by Durszt \cite{durszt} of a decomposition due to Ando \cite{ando}, that 
the operator $M$ can be decomposed as
 \begin{equation*}
 M=\rho r \,B^{1/2}UC^{1/2},
 \end{equation*}
with $U$ unitary, $C$ selfadjoint satisfying $0<C<1$, and $B=f(C)$
with $f(x)=(1\!-\!x)/(1\!-\!\rho(2\!-\!\rho)x)^{-1}$. 
Notice that $f$ is a decreasing function on the segment
$[0,1]$ and an involution\,: $f(f(x))=x$.
Let $[\alpha,\beta]$ be the smallest segment containing the spectrum of $C$. Then
 $[\sqrt \alpha,\sqrt \beta]$ is the smallest segment containing the spectrum of $C^{1/2}$ and  
 $[\sqrt{f(\beta)},\sqrt{f(\alpha)}]$ is the smallest segment containing the spectrum of $B^{1/2}$.
We have
\begin{equation*}
\|u\|\leq \|Mu\|\leq \rho r\,\sqrt{f(\alpha)}\|C^{1/2}u\|,\quad ( \forall u\in H). 
\end{equation*}
Choosing a sequence $u_n$ of norm-one vectors ($\|u_n\|=1$) such that $\|C^{1/2}u_n\|$ tends to
$\sqrt{\alpha}$, we first get\\
$1\leq \rho r\sqrt{\alpha f(\alpha)}$, i.e. $1-(2{+}\rho r^2 {-}\rho)\rho\alpha+\rho^2r^2\alpha^2\leq0$.
Consequently we have
\begin{equation*}
\frac{2{+}\rho r^2{-}\rho-\sqrt{(2{+}\rho r^2{-}\rho)^2{-}4r^2}}{2\,\rho\, r^2}\leq \alpha\leq 
\frac{2{+}\rho r^2{-}\rho+\sqrt{(2{+}\rho r^2{-}\rho)^2{-}4r^2}}{2\,\rho\, r^2},
\end{equation*}
and by $\alpha=f(f(\alpha))$
\begin{equation*}
\frac{2{+}\rho r^2{-}\rho-\sqrt{(2{+}\rho r^2{-}\rho)^2{-}4r^2}}{2\,\rho\, r^2}\leq f(\alpha)\leq 
\frac{2{+}\rho r^2{-}\rho+\sqrt{(2{+}\rho r^2{-}\rho)^2{-}4r^2}}{2\,\rho\, r^2}.
\end{equation*}
Similarly, noticing that $\|(M^*)^{-1}\|\leq 1$, $M^*=\rho r \,C^{1/2}U^*B^{1/2}$ and $C=f(B)$,
we obtain
\begin{equation*}
\frac{2{+}\rho r^2{-}\rho-\sqrt{(2{+}\rho r^2{-}\rho)^2{-}4r^2}}{2\,\rho\, r^2}\leq \beta\leq 
\frac{2{+}\rho r^2{-}\rho+\sqrt{(2{+}\rho r^2{-}\rho)^2{-}4r^2}}{2\,\rho\, r^2}.
\end{equation*}
Therefore
\begin{equation*}
\|M\|\leq \rho r\,\|B^{1/2}\|\,\|C^{1/2}\|
=\rho r\,\sqrt{ f(\alpha)\beta}\leq
\frac{2{+}\rho r^2{-}\rho+\sqrt{(2{+}\rho r^2{-}\rho)^2{-}4r^2}}{2\, r}.
\end{equation*}
This shows that $\|M\|\leq X_\rho(r)$.
\end{proof}

\section{ The exponent $1/4$ is optimal (Proof of Theorem \ref{thm:2})}

Consider the family of $n\times n $ matrices
 $A=DBD$, defined for $n=8k+4$, by
\begin{align*}
D&={\rm diag}(e^{i\pi/2n},\dots,e^{(2\ell-1) i\pi/2n},\dots,e^{(2n-1)i\pi/2n}),\\
B&=I+\tfrac{1}{2\,n^{3/2}}E,\quad\hbox{where $E$ is a matrix whose entries are defined as  }\\
&\quad e_{ij}=1\ \ \hbox{if } 3k+2\leq|i-j|\leq 5k+2,\qquad e_{ij}=0\ \ \hbox{otherwise. }
\end{align*}
We first remark that $\|A\|=\|B\|=1\!+\!\frac{1}{8\sqrt{n}}$. 
Indeed, $B$ is a symmetric matrix with non negative entries,  
$Be=(1\!+\!\frac{1}{8\sqrt{n}})e$ with $e^T=(1,1,1\dots,1)$. 
Thus $\|B\|=r(B)=1\!+\!\frac{1}{8\sqrt{n}}$ by the Perron-Frobenius theorem.\medskip

Consider now the permutation matrix $P$ defined by $p_{ij}=1$ if $i=j\!+\!1$ modulo $n$ and
$p_{ij}=0$ otherwise and the diagonal matrix $\Delta=$diag$(1,\dots,1,-1)$. Then $P^{-1}DP= e^{i\pi/n}\Delta D$ and 
$P^{-1}EP=E$, whence  $(P\Delta)^{-1}AP\Delta=e^{2i\pi/n}A$. Since $P\Delta$ is a unitary matrix, 
the numerical range 
$W(A)=\{\langle Au,u\rangle,; \|u\|=1\}$ of $A$ satisfies $W(A)=W((P\Delta)^{-1}AP\Delta)=e^{2i\pi/n}W(A)$.
This shows that the numerical range of $A$ 
is invariant by the rotation of angle $2\pi/n$ centered in 0, 
and the same property also holds for the numerical range of $A^{-1}$.\medskip

We postpone the proof of the estimates 
$\big\|\frac12(A\!+\!A^*)\big\|\leq 1$ and $\big\|\frac12(A^{-1}\!+\!(A^{-1})^{*})\big\|\leq 1$ 
to later sections. Using these estimates, we obtain that the numerical range $W(A)$ 
is contained in the half-plane $\{z\,;\Re z\leq1\}$,
whence in the regular $n$-sided polygon  given by the intersection of the 
half-planes $\{z\,;\Re (e^{2i\pi k/n}z)\leq1\}$,
$k=1,\dots,n$. Consequently $w(A)\leq 1/\cos(\pi/n)$. The proof 
of $w(A^{-1})\leq 1/\cos(\pi/n)$ is similar. 

\subsection{Proof of
$\big\|\frac12(A\!+\!A^*)\big\|\leq 1$.}
Since the $(\ell,j)$-entry of $A$ is $e^{(\ell+j-1)i\frac\pi n}\Big(\delta_{\ell,j}+\frac{e_{\ell,j}}{2n^{3/2}}\Big)$, the matrix $\frac12(A+A^*)$ is a real symmetric matrix whose $(i,j)-$entry is $\cos\big((i{+}j{-}1)\frac{\pi} {n}\big)\Big(\delta_{i,j}+\frac{e_{i,j}}{2n^{3/2}}\Big)$.
It suffices to show that, for every $u =(u_1,\cdots,u_n)^{\scriptscriptstyle T}\in \R^n$, 
we have $\|u\|^2-\Re \langle A u,u\rangle\geq0$.
Let ${\mathcal E}=\{(i,j)\,;1\leq i,j\leq n, 3k+2\leq|i-j|\leq5k+2\}$. 
The inequality which has to be proved is equivalent to
\begin{align*}
\sum_{i=1}^n 2\sin^2((i\!-\!\tfrac12)\tfrac\pi n)\,u_i^2-\tfrac{1}{2\,n^{3/2}}\sum_{i,j\in{\mathcal E}}\cos((i\!+\!j\!-\!1)\tfrac\pi n)u_i\,u_j\geq0.
\end{align*}
Setting $v_j=u_j\sin((j\!-\!\tfrac12)\tfrac\pi n)$, this may be also written as follows
\begin{align}\label{yy}
2\|v\|^2-\langle Mv,v\rangle+\tfrac{1}{2n^{3/2}}\,\langle Ev,v\rangle\geq0,\qquad (v\in\R^n).
\end{align}
Here $M$ is the matrix whose entries are defined by 
\begin{align*}
m_{ij}=\tfrac{1}{2n^{3/2}}\,\cot((i\!-\!\tfrac12)\tfrac\pi n)\,\cot((j\!-\!\tfrac12)\tfrac\pi n),\quad
\hbox{ if }(i,j)\in \mathcal E,\quad m_{ij}=0\quad\hbox{otherwise.}
\end{align*}
We will see that the Frobenius (or Hilbert-Schmidt) norm of $M$ 
satisfies $\|M\|_{_{F}}\leq \sqrt{9/32}<3/4$. A fortiori, the operator norm of $M$ 
satisfies $\|M\|\leq \tfrac{3}{4}$. Together with $\|E\|= n/4$, this shows that
$\|M\|+\tfrac{1}{2n^{3/2}}\|E\| \leq\tfrac{7}{8}$. Property \eqref{yy} is now verified.
\medskip

It remains to show that $\|M\|_{_{F}}^2\leq \tfrac{9}{32}$. First we notice 
that $m_{ij}=m_{ji}=m_{n+1-i,n+1-j}$, and $m_{ii}=0$. Hence, with $\mathcal E'=\{(i,j)\in \mathcal E\,; i<j\ {\rm and }\ i+j\leq n+1\}$,
\begin{align*}
\|M\|_{_{F}}^2=2\sum_{i<j}|m_{ij}|^2\leq 4\sum_{(i,j)\in\mathcal E'}|m_{ij}|^2.
\end{align*}
We have, for $(i,j)\in \mathcal{E}'$, 
\begin{align*}
2j&\leq i+j+5k+2\leq n+5k+3=13k+7,\quad\hbox{thus}\quad 3k+3\leq  j\leq\tfrac{13k+7}{2},\\
2i&\leq i+j-3k-2\leq n-3k-1=5k+3,\quad\hbox{thus}\quad 1\leq i\leq\tfrac{5k+3}{2}.
\end{align*}
This shows that
\begin{align*}
\tfrac{3\pi}{16}\leq\tfrac{3k+2}{16k+8}\pi\leq (j\!-\!\tfrac12)\tfrac{\pi}{n}\leq\tfrac{13k+6}{16k+8}\pi\leq\pi-\tfrac{3\pi}{16},\quad\hbox{hence}\quad |\cot((j\!-\!\tfrac12)\tfrac{\pi}{n})|\leq\cot\tfrac{3\pi}{16}\leq \tfrac32.
\end{align*}
We also use the estimate $\cot((i\!-\!\tfrac12)\tfrac\pi n)\leq n/(\pi(i\!-\!\tfrac12))$ 
and the classical relation $\sum_{i\geq1}(i-1/2)^{-2}=\pi^2/2$ to obtain
\begin{align*}
\|M\|_{_{F}}^2\leq4\sum_{(i,j)\in\mathcal E'}|m_{ij}|^2\leq \frac{4}{4n^3}\ \frac{n^2}{\pi^2}
\sum_{i\geq1} \frac{1}{(i-1/2)^2}\ (2k\!+\!1)\ \frac{9}{4}=\frac{9}{32}.
\end{align*}

\subsection{Proof of $\big\|\frac12(A^{-1}\!+\!(A^{-1})^{*})\big\|\leq 1$.}

We start from
\begin{align*}
(A^{-1})^{*}
&=D(1+\tfrac{1}{2n^{3/2}}E)^{-1}D\\&=D^2-\tfrac{1}{2n^{3/2}}DED+
\tfrac{1}{4n^3}DE^2(1+\tfrac{1}{2n^{3/2}}E)^{-1}D,
\end{align*} 
and we want to show that $\|u\|^2-\Re \langle A^{-1} u,u\rangle\geq0$. As previously, we set
$v_j=u_j\sin((j\!-\!\tfrac12)\tfrac\pi n)$. 
The inequality $\big\|\frac12(A^{-1}\!+\!(A^{-1})^{*})\big\|\leq 1$ is equivalent to
\begin{align*}
2\|v\|^2-\langle (M_1+M_2+M_3+M_4)v,v\rangle \geq0,\qquad (v\in\R^n).
\end{align*}
Here the entries of the matrices $M_p$, $1\le p \le 4$, are given by
\begin{align*}
(m_1)_{ij}&= -\tfrac{1}{2n^{3/2}}\,\cot((i\!-\!\tfrac12)\tfrac\pi n)\,\cot((j\!-\!\tfrac12)\tfrac\pi n)e_{ij},\\
(m_2)_{ij}&= \tfrac{1}{2n^{3/2}}\,e_{ij},\\
(m_3)_{ij}&=\tfrac{1}{4n^3}\,\cot((i\!-\!\tfrac12)\tfrac\pi n)\,\cot((j\!-\!\tfrac12)\tfrac\pi n) f_{ij},\\
(m_4)_{ij}&= -\tfrac{1}{4n^3}\,f_{ij},
\end{align*}
$e_{ij}$ and $f_{ij}$ respectively denoting the entries of the matrices
$E$ and $F=E^2(1+\tfrac{1}{2n^{3/2}}E)^{-1}$.
Noticing that $M_1=-M$, we have $\|M_1\|\leq \tfrac{3}{4}$, $\|M_2\|= \tfrac{1}{8\sqrt n}$,
 $\|F\|\leq\tfrac{n^2/16}{1-1/(8\sqrt n)}\leq\tfrac{n^2}{14}$ and $\|M_4\|= \tfrac{1}{4n^3}\,\|F\|$. 
Now we use
\begin{align*}
\|M_3\|^2\leq \|M_3\|_{_{F}}^2\leq \tfrac{1}{16n^6}\max_{ij}|f_{ij}|^2\sum_{i,j}|\cot((i\!-\!\tfrac12)\tfrac\pi n)|^2\,|\cot((j\!-\!\tfrac12)\tfrac\pi n)|^2,
\end{align*}
together with
\begin{align*}
\sum_{i,j}|\cot((i\!-\!\tfrac12)\tfrac\pi n)|^2\,|\cot((j\!-\!\tfrac12)\tfrac\pi n)|^2&=\Big(\sum_{i=1}^n|\cot((i\!-\!\tfrac12)\tfrac\pi n)|^2\Big)^2\\
&\leq4\Big(\sum_{i=1}^{n/2}|\cot((i\!-\!\tfrac12)\tfrac\pi n)|^2\Big)^2\leq n^4,
\end{align*}
to obtain
\begin{align*}
\|M_3\|\leq \tfrac{1}{4n}\max_{ij}|f_{ij}|.
\end{align*}
Using the notation $\|E\|_{\infty}:=\max\{\|Eu\|_{\infty}\,; u\in\C^n,\|u\|_\infty\leq1\}$ for the operator norm induced by the maximum norm in $\C^d$, it holds $\|E\|_\infty=n/4$, whence $\|\tfrac{1}{2n^{3/2}}E\|_\infty\leq1/8$ and thus $\|(1+\tfrac{1}{2n^{3/2}}E)^{-1}\|_\infty\leq\tfrac{1}{1-1/8}=\tfrac87$.
This shows that
\begin{align*}
\max_{ij}|f_{ij}|\leq \|(1+\tfrac{1}{2n^{3/2}}E)^{-1}\|_\infty\max_{ij}|e^2_{ij}|\leq \frac{2n}{7},
\end{align*}
by denoting $e^2_{ij}$ the entries of the matrix $E^2$and noticing that 
$\max_{i,j}|e^2_{ij}|=n/4$. Finally, we obtain $\|M_3\|\leq \tfrac{1}{14}$ and
$\|M_1+M_2+M_3+M_4\|\leq\tfrac34+\tfrac18+\tfrac{1}{14}+\tfrac{1}{56}<1$.\bigskip

\bigskip

{\bf Acknowledgement.} The authors thank the referees for a careful reading of the article and for their useful remarks 
and suggestions. The first-named author was supported in part by ANR Projet Blanc DYNOP.

\bibliographystyle{amsplain}

\medskip

\noindent C. B. : Laboratoire Paul Painlev\' e, UMR no. 8524, B\^at. M2,
Universit\'e Lille 1, 59655 Villeneuve d'Ascq Cedex, France. \\ E-mail : {\tt badea@math.univ-lille1.fr}

\smallskip

\noindent M. C. : Institut de Recherche Math\'ematique de Rennes, UMR no. 6625,
Universit\'e Rennes 1, Campus de Beaulieu, 35042
Rennes Cedex, France \\ E-mail : {\tt michel.crouzeix@univ-rennes1.fr}
\end{document}